%
%
%
%
\documentclass[11pt,reqno]{amsart}

\usepackage[left=3.5cm,right=3.5cm,top=3.5cm,bottom=3.5cm]{geometry}
\usepackage{tipa}
\usepackage{amssymb}
\usepackage{graphicx}
\usepackage[hidelinks]{hyperref}
\usepackage{enumerate}
\usepackage{xcolor}
\usepackage{amsmath}
\allowdisplaybreaks[4]

\numberwithin{equation}{section}
\newtheorem{theorem}{Theorem}[section]

\newtheorem{corollary}{Corollary}[section]
\newtheorem{lemma}[theorem]{Lemma}

\theoremstyle{definition}

\newtheorem*{remarks*}{Remarks}

\numberwithin{equation}{section}




\title{On the number of representations of $n=a+b$ with $ab$ a multiple of a polygonal number}

\author[H. Zhong]{Hao Zhong}
\address{(H. Zhong) School of Mathematical Sciences, Zhejiang University, Hangzhou, 310027, China}
\curraddr{}
\email{11435011@zju.edu.cn}
\thanks{}

\author[T. Cai]{Tianxin Cai}
\address{(T. Cai) School of Mathematical Sciences, Zhejiang University, Hangzhou, 310027, China}
\curraddr{}
\email{txcai@zju.edu.cn}
\thanks{}

\keywords{$n^2+1$ problem, representation, binary quadratic form}
\subjclass[2010]{11B50, 11B39}

\begin{document}

\maketitle

\thispagestyle{empty}

\begin{abstract}
In this paper, we study the number of representations of a positive integer $n$ by two positive integers whose product is a multiple of a polygonal number.
\end{abstract}

\section{Introduction}

Let $n$ be a positive integer. In 2014, Cai et al.\cite{Cai2014} studied the following equation

\begin{equation}\label{eq:intro}
\begin{cases}
n=a+2b, \\
ab=\binom{c}{2}
\end{cases}
\end{equation}
where $a$, $b$ an $c$ are positive integers. And they proved that (\ref{eq:intro}) is unsolvable iff $n^2+1$ is a prime. This relates to the famous $n^2+1$ problem: to show $n^2+1$ is prime infinitely, which was raised by Euler in a letter to Goldbach in 1752. It's obvious that it is true iff there exists infinite $n$ such that (\ref{eq:intro}) is unsolvable.

We denote by $P(m,c)$ the $c$th $m$-gonal number, i.e., $P(m,c)=\frac{c}{2}\{(m-2)c-(m-4)\}$. In order that $c\mapsto P(m,c)$ is injective, $c$ is a positive integer if $m=3$, $4$, and $c$ is an integer if $m>4$. In this paper, we propose a generalized function of (\ref{eq:intro}) as follows,
\begin{equation}\label{eq:main}
\begin{cases}
n=a+b, \\
ab=tP(m,c)
\end{cases}
\end{equation}
where $a$, $b$, and $t$ are positive integers. Let $r_{m,t}(n)$ denote the number of the representations of $n$ by $a+b$ with $ab=tP(m,c)$. Then we obtain a sufficient condition for $r_{m,t}(n)=0$.

\begin{theorem}\label{th:1}
If $2(m-2)n^2+t(m-4)^2$ is a prime, then (\ref{eq:main}) is unsolvable.
\end{theorem}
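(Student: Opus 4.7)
The plan is to reduce (\ref{eq:main}) to a single binary quadratic equation and then exploit primality to eliminate all non-trivial solutions. Writing $p:=2(m-2)n^{2}+t(m-4)^{2}$, I start from $(a-b)^{2}=n^{2}-4ab$ together with $4ab=2t\bigl((m-2)c^{2}-(m-4)c\bigr)$, multiply by $2(m-2)$, and complete the square in $c$ to obtain the identity
\begin{equation*}
t\bigl[2(m-2)c-(m-4)\bigr]^{2}+2(m-2)(a-b)^{2}=p.
\end{equation*}
Setting $u:=2(m-2)c-(m-4)$ and $v:=a-b$, any solution of (\ref{eq:main}) therefore produces integers $(u,v)$ with $tu^{2}+2(m-2)v^{2}=p$. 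Crucially, the very definition of $p$ supplies a tautological second representation of $p$ by the same form, namely $(u_{0},v_{0})=(m-4,n)$.

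Next, I compare these two representations. Substituting both expressions for $p$ into $p(n^{2}-v^{2})$ and simplifying gives the identity
\begin{equation*}
p(n-v)(n+v)=t\bigl(un-(m-4)v\bigr)\bigl(un+(m-4)v\bigr).
\end{equation*}
A quick size estimate rules out $p\mid t$ (the case $m=4$ being vacuous because then $p=4n^{2}$ is never prime), so primality forces $p$ to divide either $un-(m-4)v$ or $un+(m-4)v$. To upgrade this divisibility to equality, I invoke the Brahmagupta composition identity applied to the two representations of $p$, which yields
\begin{equation*}
p^{2}=\bigl(tu(m-4)\mp 2(m-2)vn\bigr)^{2}+2t(m-2)\bigl(un\pm(m-4)v\bigr)^{2}.
\end{equation*}
If $un\pm(m-4)v=pk$ with $k\neq 0$, the second summand alone contributes $2t(m-2)p^{2}k^{2}\geq 2p^{2}>p^{2}$, impossible. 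Hence $un=\mp(m-4)v$.

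The contradiction is now immediate. Multiplying $tu^{2}+2(m-2)v^{2}=p$ by $n^{2}$ and replacing $u^{2}n^{2}$ by $(m-4)^{2}v^{2}$ collapses to $v^{2}\bigl(t(m-4)^{2}+2(m-2)n^{2}\bigr)=pn^{2}$, that is, $v=\pm n$. Since $v=a-b$ and $n=a+b$ with $a,b\geq 1$, this forces $a=0$ or $b=0$, contradicting (\ref{eq:main}). I expect the only delicate step to be the passage from the divisibility $p\mid un\pm(m-4)v$ to equality, which hinges on getting the signs right in the Brahmagupta identity; once that identity is recorded, the remaining manipulations are essentially linear, and the sporadic case $m=4$ is immediately disposed of by noticing that $4n^{2}$ is never prime.
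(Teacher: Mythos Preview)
Your argument is correct and follows the same opening move as the paper: both reduce a hypothetical solution of (\ref{eq:main}) to a representation $p=tu^{2}+2(m-2)v^{2}$ with $|v|=|a-b|<n$, distinct from the tautological representation $(u,v)=(m-4,n)$. The paper then finishes in one line by citing Nagell's result (Lemma~\ref{lem:1}) that a prime has at most one representation by a fixed form $cx^{2}+dy^{2}$; you instead prove this uniqueness in place, via the factorisation $p(n^{2}-v^{2})=t(un-(m-4)v)(un+(m-4)v)$ and the Brahmagupta composition identity, which forces $un=\pm(m-4)v$ and hence $v=\pm n$. What you gain is self-containment---no black-box lemma is invoked---at the cost of a few extra lines of algebra; what the paper gains is brevity. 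The two proofs are otherwise the same, and your handling of the degenerate case $m=4$ and the estimate $p>t$ is fine.
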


And if we denote by $r'_{m,t}(n)$ the number of nonnegative integer solutions to the following equation,
\begin{equation}\label{eq:mainqua}
2(m-2)n^2+t(m-4)^2=2(m-2)x^2+ty^2.
\end{equation}
Then we obtain an interesting relation between $r_{m,t}(n)$ and $r'_{m,t}(n)$.

\begin{theorem}\label{th:2}
$r_{m,t}(n)=r'_{m,t}(n)-1$ in the following three cases,\\
(1) $t=1$, and $m=3$ or $p+2$;\\
(2) $t=2$, and $m=3$ or $2p+2$;\\
(3) $t$ is an odd prime, and $t\neq m-2$, with $m=3$ or $p+2$,\\
where $p$ is an odd prime.
\end{theorem}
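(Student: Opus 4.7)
The plan is to construct a bijection between the representations $(a,b,c)$ counted by $r_{m,t}(n)$ and the nonnegative integer solutions $(x,y)$ of (\ref{eq:mainqua}) other than the ``trivial'' one $(x_0,y_0):=(n,|m-4|)$; this immediately yields $r_{m,t}(n)=r'_{m,t}(n)-1$. I would define the forward map by
\[
\Phi\colon (a,b,c)\longmapsto \bigl(|a-b|,\,|2(m-2)c-(m-4)|\bigr),
\]
and verify it lands in solutions of (\ref{eq:mainqua}) using $(a-b)^2=n^2-4ab$ together with the identity $8(m-2)P(m,c)=[2(m-2)c-(m-4)]^2-(m-4)^2$: substituting $ab=tP(m,c)$ and multiplying through by $2(m-2)$ gives exactly (\ref{eq:mainqua}). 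The excluded pair $(x_0,y_0)$ corresponds formally to $\{a,b\}=\{n,0\}$ and $c=0$, which is disallowed by positivity of $b$ and the admissibility constraints on $c$; this accounts for the ``$-1$''.

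To invert $\Phi$, from a nonnegative integer solution $(x,y)\ne (x_0,y_0)$ I would set $\{a,b\}=\{(n+x)/2,(n-x)/2\}$ and recover $c$ as one of the two values $[(m-4)\pm y]/[2(m-2)]$. The inversion produces a valid representation precisely when (i) $x\equiv n\pmod 2$, so that $a,b$ are integers; (ii) $x<n$, so that $b$ is positive; and (iii) exactly one of the two sign choices yields an integer $c$ of the allowed type (positive for $m=3,4$; nonzero for $m>4$). The heart of the proof is the modular arithmetic needed to verify (i)--(iii) for every nonnegative integer solution of (\ref{eq:mainqua}) in each of the three cases.

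For case~(1), reducing (\ref{eq:mainqua}) modulo~$2$ forces $y$ to be odd; then reducing modulo~$8$ forces $x\equiv n\pmod 2$; and reducing modulo $m-2$ yields $y^2\equiv(m-4)^2\pmod{m-2}$, so $y\equiv\pm(m-4)$ and exactly one of the sign choices makes $(m-4)\pm y$ a multiple of $2(m-2)$. Cases~(2) and~(3) follow the same template with moduli adjusted to $4(m-2)$ (to accommodate $t=2$) and $2t(m-2)$ (to accommodate the odd prime $t$) respectively; the hypothesis $t\ne m-2$ in case~(3) is what ensures $\gcd(t,m-2)=1$, so that a Chinese Remainder splitting of the congruences on $y$ is available. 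I expect the main difficulty to lie in clause (iii): one must rule out ``spurious'' solutions $(x,y)\ne(x_0,y_0)$ that either fail to produce any integer $c$ or produce two simultaneously valid ones. This is where the arithmetic restrictions on $(m,t)$ enter in an essential way --- each listed combination is calibrated precisely so that one sign works and the other does not, and the involution $c\mapsto (m-4)/(m-2)-c$ preserving $P(m,\cdot)$ degenerates to an integer fixed point only at the trivial $c=0$.
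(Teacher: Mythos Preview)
Your overall strategy---the bijection via $\Phi(a,b,c)=(|a-b|,|2(m-2)c-(m-4)|)$ and its inverse $((n+x)/2,(n-x)/2,c_\pm)$---is exactly the paper's approach; the forward direction is set up in the proof of Theorem~\ref{th:1} and the inverse is built case by case just as you describe.

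There is, however, a genuine gap at clause (ii). You list $x<n$ as something to check but your congruence work for case~(1) only yields $y$ odd, $x\equiv n\pmod 2$, and $y\equiv\pm(m-4)\pmod{m-2}$; none of this is an inequality. The paper closes this by a size argument: assuming $x>n$ forces $0\le y<|m-4|$, and then from $2(m-2)(x^2-n^2)=(m-4+y)(m-4-y)$ the factor $2(m-2)$ must divide one of $m-4\pm y$, yet both lie strictly in $(0,2(m-2))$, a contradiction. Your congruences can be made to do the same job---once $y\equiv\pm(m-4)\pmod{2(m-2)}$, the least nonnegative value other than $|m-4|$ is already $\ge m$---but the argument is a size argument, not a purely modular one, and you have not stated it. A second oversight: your ``adjust moduli to $4(m-2)$'' template does not handle the exceptional sub-case $m=3$, $t=2$, where the equation becomes the symmetric $n^2+1=x^2+y^2$; the paper treats this separately by transposing $x$ and $y$ when needed so that $y$ is odd. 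Finally, your concern about two simultaneously valid values of $c$ is moot: $c_++c_-=(m-4)/(m-2)$ is never an integer for $m>4$, and positivity settles $m=3$.
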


Let $d_{A}(n)=\prod_{p\notin A}(1+ord_{p}n)$. In particular, $d_{\varnothing}(n)=d(n)$ denoting the number of divisors of $n$. With the help of the theory of binary quadratic forms, we can obtain the results for some $m$ and $t$.

\begin{theorem}\label{th:3}
Let $ord_{p}(n)$ be the integer such that $p^{ord_{p}(n)}||n$ and $d(n)$ denote the number of divisors of $n$, where $p$ is a prime and $n$ is an integer. Then
\begin{enumerate}
\item $r_{3,1}(n)=\lfloor \{d(2n^2+1)-1\}/2 \rfloor$;
\item $r_{5,1}(n)=\lfloor \{d(6n^2+1)-1\}/2 \rfloor$;
\item $r_{7,1}(n)=\lfloor \{d_{\{3\}}(10n^2+9)-1\}/2 \rfloor$;
\item $r_{13,1}(n)=\lfloor \{d_{\{3\}}(22n^2+81)-1\}/2 \rfloor$;
\item $r_{31,1}(n)=\lfloor \{d_{\{3\}}(58n^2+729)-1\}/2 \rfloor$;
\item $r_{3,2}(n)=d(n^2+1)/2-1$;
\item $r_{8,2}(n)=\lfloor \{d_{\{2\}}(3n^2+8)-1\}/2 \rfloor$;
\item $r_{12,2}(n)=\lfloor \{d_{\{2\}}(5n^2+32)-1\}/2 \rfloor$;
\item $r_{24,2}(n)=\lfloor \{d_{\{2,5\}}(11n^2+200)-1\}/2 \rfloor$;
\item $r_{60,2}(n)=\lfloor \{d_{\{2,7\}}(29n^2+1568)-1\}/2 \rfloor$;
\item $r_{3,3}(n)=\lfloor \{d_{\{3\}}(2n^2+3)-1\}/2 \rfloor$;
\item $r_{3,5}(n)=\lfloor \{d_{\{5\}}(2n^2+5)-1\}/2 \rfloor$;
\item $r_{3,11}(n)=\lfloor \{d_{\{11\}}(2n^2+11)-1\}/2 \rfloor$;
\item $r_{3,29}(n)=\lfloor \{d_{\{29\}}(2n^2+29)-1\}/2 \rfloor$.
\end{enumerate}
\end{theorem}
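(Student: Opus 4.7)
The plan is to derive each of the fourteen formulas in Theorem~\ref{th:3} by combining Theorem~\ref{th:2} with the classical theory of representations by binary quadratic forms. By Theorem~\ref{th:2}, each listed pair $(m,t)$ satisfies $r_{m,t}(n)=r'_{m,t}(n)-1$, so it is enough to count the nonnegative integer solutions $(x,y)$ to
\[
Q(x,y)\;:=\;2(m-2)x^{2}+ty^{2}\;=\;2(m-2)n^{2}+t(m-4)^{2}\;=:\;N.
\]
Across the fourteen cases, the discriminant $D$ of $Q$ (after extracting content when $Q$ is imprimitive) takes exactly six values, $\{-4,-8,-24,-40,-88,-232\}$, and for each of these $h(D)\in\{1,2\}$; when $h(D)=2$, the class group coincides with the genus group, so each genus contains exactly one class. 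When $Q$ is imprimitive (for instance whenever $t=2$, in which case $Q$ has content $2$), a short parity/divisibility argument forces one of $x,y$ into a fixed residue class and, after substituting, reduces the equation to $Q_{0}(x,y')=N_{0}$ with $Q_{0}$ primitive of discriminant $D$ and $N_{0}$ a divisor of $N$ in closed form (e.g.\ in case (7), $12x^{2}+2y^{2}=4(3n^{2}+8)$ becomes $3x^{2}+2y'^{2}=3n^{2}+8$ after setting $y=2y'$).

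The crucial arithmetic input is that every prime divisor $p$ of $N_{0}$ lying outside a finite exceptional set $A$ splits in the quadratic order of discriminant $D$, i.e., $\chi_{D}(p)=+1$. Indeed, $p\mid N_{0}$ forces $2(m-2)x^{2}\equiv -ty^{2}\pmod{p}$ in any representation, which — provided $p$ does not divide the leading coefficients — yields $\chi_{D}(p)=+1$ after a brief manipulation. The set $A$ appearing in each formula of Theorem~\ref{th:3} is precisely the set of primes that can divide $N_{0}$ for some $n$ and fail to split (i.e., are ramified or inert in the order of discriminant $D$). A case-by-case congruence check on $N_{0}(n)$ shows that each such $p\in A$ is either ramified (contributing the fixed factor $1$ to any character sum) or inert with always-even exponent in $N_{0}$ (again contributing the factor $1$). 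Moreover, the explicit solution $(x,y')=(n,|m-4|)$ (or its analogue after the Step~1 substitution) shows directly that $N_{0}$ lies in the genus of $Q_{0}$, so the classical character-sum formula
\[
r_{Q_{0}}(N_{0})\;=\;w\sum_{d\mid N_{0}}\chi_{D}(d)
\]
is valid (with $w\in\{2,4\}$ the number of units in the order of discriminant $D$), and by the split-prime observation collapses to $r_{Q_{0}}(N_{0})=w\cdot d_{A}(N_{0})$.

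Sign changes $(x,y')\mapsto(\pm x,\pm y')$ act on the set of integer representations with nontrivial stabilizer exactly when $x=0$ or $y'=0$. In the generic case (no such boundary solution), dividing by the sign-symmetry order gives $r'_{m,t}(n)=d_{A}(N_{0})/2$; if instead a boundary solution exists (which forces $d_{A}(N_{0})$ to be odd), one gets $r'_{m,t}(n)=(d_{A}(N_{0})+1)/2$. Both cases are uniformly captured by $\lfloor(d_{A}(N_{0})-1)/2\rfloor+1$, and subtracting $1$ (for the trivial solution, corresponding to the degenerate case in (\ref{eq:main})) recovers the formulas of Theorem~\ref{th:3}. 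The principal technical obstacle lies in the four class-number-two discriminants $-24,-40,-88,-232$: for each of these one must identify the precise exceptional set $A$ and verify that primes in $A$ can only appear in $N_{m,t}(n)$ with controlled (typically $0$- or $2$-bounded) exponents, a computation that must be carried out separately for each family of cases sharing a discriminant.
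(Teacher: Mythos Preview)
Your approach is essentially the same as the paper's: both reduce via Theorem~\ref{th:2} to counting representations by the binary form $2(m-2)x^2+ty^2$, observe that the relevant (primitive) discriminants all have class number $1$ or $2$, and exploit that every odd prime divisor of $N$ outside the exceptional set $A$ satisfies $\chi_D(p)=+1$ so that the Dirichlet-type representation count collapses to $d_A(N)$. The paper packages the $h(d)=2$ step by citing Sun--Williams (Lemma~\ref{lem:5}) rather than invoking the one-class-per-genus principle directly, and it does not spell out the content-extraction/substitution you describe for the imprimitive $t=2$ cases, but the underlying argument and the residual case-by-case verification of the exceptional primes are identical.
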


Let $\mathbb{P}$ denote the set of prime numbers. Then we have
\begin{corollary}\label{cor:1}
\begin{enumerate}

\item $r_{3,1}(n)=0$ iff $2n^{2}+1\in\mathbb{P}$;

\item $r_{5,1}(n)=0$ iff $6n^{2}+1\in\mathbb{P}$;

\item $r_{7,1}(n)=0$ iff $10n^{2}+9\in\mathbb{P}\cup9\mathbb{P}$;

\item $r_{13,1}(n)=0$ iff $22n^{2}+81\in\mathbb{P}\cup9\mathbb{P}\cup81\mathbb{P}$;

\item $r_{31,1}(n)=0$ iff $58n^{2}+729\in\mathbb{P}\cup9\mathbb{P}\cup81\mathbb{P}\cup729\mathbb{P}$;

\item $r_{3,2}(n)=0$ iff $n^{2}+1\in\mathbb{P}$;

\item $r_{8,2}(n)=0$ iff $3n^{2}+8\in\mathbb{P}\cup4\mathbb{P}\cup8\mathbb{P}$;

\item $r_{12,2}(n)=0$ iff $5n^{2}+32\in\mathbb{P}\cup4\mathbb{P}\cup16\mathbb{P}\cup32\mathbb{P}$;

\item $r_{24,2}(n)=0$ iff $11n^{2}+200\in\mathbb{P}\cup4\mathbb{P}\cup8\mathbb{P}\cup25\mathbb{P}\cup100\mathbb{P}\cup200\mathbb{P}$;

\item $r_{60,2}(n)=0$ iff $29n^{2}+1568\in\mathbb{P}\cup4\mathbb{P}\cup16\mathbb{P}\cup32\mathbb{P}\cup49\mathbb{P}\cup196\mathbb{P}\cup784\mathbb{P}\cup1568\mathbb{P}$;

\item $r_{3,3}(n)=0$ iff $2n^{2}+3\in\mathbb{P}\cup3\mathbb{P}$;

\item $r_{3,5}(n)=0$ iff $2n^{2}+5\in\mathbb{P}\cup5\mathbb{P}$;

\item $r_{3,11}(n)=0$ iff $2n^{2}+11\in\mathbb{P}\cup11\mathbb{P}$;

\item $r_{3,29}(n)=0$ iff $2n^{2}+29\in\mathbb{P}\cup29\mathbb{P}$.

\end{enumerate}
\end{corollary}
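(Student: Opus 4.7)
The plan is to deduce each of the fourteen equivalences as a direct unpacking of the corresponding formula in Theorem~\ref{th:3}. Let $M = M_{m,t}(n)$ denote the quadratic polynomial in $n$ appearing inside $d_A$. In every item except (6), the condition $r_{m,t}(n)=0$ is equivalent to $\lfloor(d_A(M)-1)/2\rfloor = 0$, i.e.\ $d_A(M)\in\{1,2\}$; in item (6), which has no floor, it is equivalent to $d(n^2+1)=2$, i.e.\ primality of $n^2+1$. Since $d_A(M) = \prod_{p\notin A}(1+ord_p M)$, the bound $d_A(M)\le 2$ is equivalent to the quotient $M/\prod_{p\in A}p^{ord_p M}$ being $1$ or a single prime outside $A$. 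So everything reduces to pinning down, for each $p\in A$, the exact finite set of values of $ord_p M(n)$ as $n$ runs through positive integers.

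This is a short $p$-adic calculation, which I would carry out case by case. Since in every item the constant term of $M$ is built exactly from the primes in $A$ to known powers, $M$ is coprime to $p$ whenever $n$ is, and successive substitutions $n\mapsto pn'$ pull out a controlled power of $p$ at each stage until the remaining constant term becomes a $p$-adic unit. For example, in item (3), $M = 10n^2+9$: if $3\nmid n$, then $ord_3 M=0$; if $n=3n'$, then $M = 9(10n'^2+1)$ with $10n'^2+1\not\equiv 0\pmod 3$, so $ord_3 M=2$. Hence $ord_3 M\in\{0,2\}$, and the condition $d_{\{3\}}(M)\le 2$ becomes $M\in\mathbb{P}\cup 9\mathbb{P}$, which is exactly item (3). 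The same procedure handles items (1), (2), (4), (5), (7), (8), (11)--(14), each of which involves at most one prime in $A$; the lists $\{0,2,4\}$, $\{0,2,4,6\}$, $\{0,2,3\}$, $\{0,2,4,5\}$ etc.\ obtained for $ord_p M$ match term-by-term with the prime-power prefactors listed in the corollary.

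The two-prime cases (9) and (10) are handled by running the above analysis independently at each $p\in A$ and then multiplying: for item (10), $M = 29n^2+1568 = 29n^2+2^5\cdot 7^2$, the iteration gives $ord_2 M\in\{0,2,4,5\}$ and $ord_7 M\in\{0,2\}$, and the eight products $2^{ord_2 M}\cdot 7^{ord_7 M}$ are precisely $\{1,4,16,32,49,196,784,1568\}$, matching the stated prefactors; item (9) is similar. A final bookkeeping check rules out the degenerate $d_A(M)=1$ branch, which would require $M$ itself to equal one of the prefactors listed: in every item one verifies that $M(1)$ already exceeds the largest such prefactor (e.g.\ $M(1) = 1597 > 1568$ in item (10)), so this branch is vacuous and only the $d_A(M)=2$ branch contributes, yielding the claimed disjoint union of cosets. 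The main (and only) obstacle is the bookkeeping itself: running the $p$-adic iteration cleanly for all fourteen expressions and checking that the resulting list of admissible prefactors matches what is stated, which is purely mechanical once Theorem~\ref{th:3} is in hand.
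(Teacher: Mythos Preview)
Your proposal is correct and is exactly the intended argument: the paper states Corollary~\ref{cor:1} without proof, as an immediate consequence of Theorem~\ref{th:3}, and what you have written is precisely the routine unpacking of the formulas $r_{m,t}(n)=\lfloor(d_A(M)-1)/2\rfloor$ (and $d(n^2+1)/2-1$ in item (6)). The $p$-adic iteration you describe, the resulting finite lists of admissible $ord_p M$, and the check that $M(1)$ exceeds the largest prefactor to kill the $d_A(M)=1$ branch are all correct and match the stated prefactor sets term by term.
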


And it was proven in \cite{Iwaniec1978} that there exist infinitely many integers $n$ such that $G(n)$ has at most two prime factors, where $G(n)=an^2+bn+c$ is an irreducible polynomial with $a>0$ and $c$ is odd. Hence we have

\begin{corollary}\label{cor:2}
If $(m,t)\in\{(3,1),(5,1),(7,1),(13,1),(31,1),(3,2),(3,5),(3,11),(3,29)\}$, then there are infinitely many integers $n$ such that $r_{m,t}(n)\leq1$.
\end{corollary}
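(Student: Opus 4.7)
My plan is to combine the divisor-count formulas of Theorem~\ref{th:3} with the Iwaniec almost-prime theorem cited just before the statement. For each of the nine listed pairs $(m,t)$, Theorem~\ref{th:3} writes $r_{m,t}(n)$ as $\lfloor (d_A(G_{m,t}(n))-1)/2\rfloor$, except for $(m,t)=(3,2)$ where it equals $d(G_{m,t}(n))/2 - 1$. Here $G_{m,t}(x)=ax^2+c$ runs through the nine polynomials
\[
2x^2+1,\ 6x^2+1,\ 10x^2+9,\ 22x^2+81,\ 58x^2+729,\ x^2+1,\ 2x^2+5,\ 2x^2+11,\ 2x^2+29,
\]
and $A$ is a small (possibly empty) finite set of primes. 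Since $d_A(N)\le d(N)$ and both expressions above evaluate to at most $1$ whenever $d(N)\le 4$ (note $\lfloor 3/2\rfloor = 4/2-1 = 1$), a uniform sufficient condition across all nine cases is $d(G_{m,t}(n))\le 4$.

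The second ingredient is the elementary remark that if $N$ is a $P_2$-number, i.e.\ a product of at most two primes counted with multiplicity, then $N\in\{1,\,p,\,p^2,\,p_1p_2\}$ and so $d(N)\le 4$. Thus the corollary reduces to producing, for each polynomial $G_{m,t}$ above, infinitely many positive integers $n$ for which $G_{m,t}(n)$ is a $P_2$.

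For this I would invoke Iwaniec's 1978 theorem as quoted just before the corollary. Each $G_{m,t}(x)=ax^2+c$ in the list has $a>0$ and $c$ odd, and its discriminant $-4ac$ is negative, so $G_{m,t}$ is irreducible over $\mathbb{Q}$; the hypotheses of Iwaniec's result are thereby met, and it supplies infinitely many $n$ with $G_{m,t}(n)$ a $P_2$. Combining the three steps yields the corollary. There is no genuine analytic obstacle beyond the appeal to Iwaniec; the only real work is the case-by-case bookkeeping of the first paragraph, which is entirely routine given that the $d_A$ appearing in Theorem~\ref{th:3} is uniformly dominated by $d$.
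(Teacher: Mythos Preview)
Your proposal is correct and follows precisely the route the paper itself takes: the paper's entire argument for Corollary~\ref{cor:2} is the sentence immediately preceding it, namely the citation of Iwaniec's $P_2$ result applied to the irreducible polynomials $ax^2+c$ with $a>0$ and $c$ odd arising from Theorem~\ref{th:3}. Your write-up simply makes explicit the bookkeeping (that $d_A\le d$ and that $d(N)\le 4$ for a $P_2$ forces both $\lfloor (d_A-1)/2\rfloor\le 1$ and $d/2-1\le 1$) which the paper leaves to the reader.
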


\section{Preliminaries}

In order to prove the theorems, we still need some lemmas.

\begin{lemma}[{\cite[Theorem 101]{Nagell1952}}]\label{lem:1}
If $c$ and $d$ are given natural numbers, there is at most one representation of the prime $p$ in the form $p=cx^2 + dy^2$, where $x$ and $y$ are natural numbers.
\end{lemma}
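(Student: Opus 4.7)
The plan is to argue by contradiction: suppose $p=cx_1^2+dy_1^2=cx_2^2+dy_2^2$ are two distinct representations with $x_i,y_i$ natural numbers, and exploit the multiplicative structure of the form $cX^2+dY^2$ to force $(x_1,y_1)=(x_2,y_2)$. As a preliminary step I would record that $\gcd(p,cd)=1$: if $p\mid c$, then $p\leq c$, yet $p=cx_1^2+dy_1^2\geq c+d>c$ since $x_1,y_1\geq 1$, a contradiction, and the same argument rules out $p\mid d$.

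The main tool is the Brahmagupta-type identity
\begin{equation*}
(cU^2+dV^2)(cU'^2+dV'^2)=(cUU'\pm dVV')^2+cd(UV'\mp U'V)^2,
\end{equation*}
which, applied to the two representations, yields
\begin{equation*}
p^2=(cx_1x_2+dy_1y_2)^2+cd(x_1y_2-x_2y_1)^2=(cx_1x_2-dy_1y_2)^2+cd(x_1y_2+x_2y_1)^2.
\end{equation*}
Independently, multiplying $p=cx_2^2+dy_2^2$ by $x_1^2$ and $p=cx_1^2+dy_1^2$ by $x_2^2$ and subtracting gives $p(x_1^2-x_2^2)=d(x_1y_2-x_2y_1)(x_1y_2+x_2y_1)$, so since $\gcd(p,d)=1$ and $p$ is prime, $p$ must divide one of the factors $x_1y_2\pm x_2y_1$.

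If $p\mid x_1y_2+x_2y_1$, the second identity above forces $cd(x_1y_2+x_2y_1)^2\leq p^2$; for $cd\geq 2$ this requires $x_1y_2+x_2y_1=0$, impossible for positive integers. Hence $p\mid x_1y_2-x_2y_1$, and the first identity then gives $cd(x_1y_2-x_2y_1)^2\leq p^2$, which for $cd\geq 2$ forces $x_1y_2=x_2y_1$. Substituting back, $(x_1,y_1)$ and $(x_2,y_2)$ are proportional with equal form-value $p$, collapsing to $(x_1,y_1)=(x_2,y_2)$, the desired contradiction.

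The main obstacle I anticipate is the degenerate case $c=d=1$, where the squeeze $cd\geq 2$ breaks down and uniqueness reduces to the classical two-squares theorem for a prime $p\equiv 1\pmod 4$; this must be handled separately, e.g., by factorization in the Gaussian integers or by Fermat descent, but it is a well-known fact. Everything else amounts to verifying the Brahmagupta identity and tracking the divisibility bookkeeping sketched above.
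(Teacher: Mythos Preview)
The paper does not supply its own proof of this lemma; it simply quotes it as Theorem~101 of Nagell's textbook and uses it as a black box in the proof of Theorem~\ref{th:1}. So there is no in-paper argument to compare against.

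Your argument is the classical one and is essentially what Nagell himself gives: multiply the two representations via the Brahmagupta identity, deduce $p\mid (x_1y_2-x_2y_1)(x_1y_2+x_2y_1)$ from the linear combination, and squeeze using $cd\ge 2$. The steps are correct as written. Two minor remarks. First, your worry about $c=d=1$ is real at the level of the bare statement (swapping $x$ and $y$ gives a second ordered pair), but it is irrelevant for this paper: the only use of the lemma is with $c=2(m-2)\ge 2$ and $d=t\ge 1$, so $cd\ge 2$ always holds in the applications. Second, the case $c=d\ge 2$ is in fact vacuous, since then $p=c(x^2+y^2)$ with $x,y\ge 1$ forces $c\mid p$ and $p\ge 2c$, which is impossible; so the only genuinely delicate case is indeed $c=d=1$, exactly as you flagged.
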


To compute $r'_{m,t}(n)$, we have to apply some results in \cite{Sun2006}. We first introduce some notations of \cite{Sun2006}.

$(a,b,c):$ the binary quadratic form $ax^2+bxy+cy^2$, where $a$, $b$, $c\in \mathbb{Z}$;

$[a,b,c]:$ the equivalence class containing the form $(a,b,c)$;

$R([a,b,c],n):$ the number of representations of $n$ by the class $[a,b,c]$;

$d:$ the discriminant of $(a,b,c)$ with $d=b^2-4ac$;

$f=f(d):$ the conductor of $d$;

$H(d):$ the form class group consisting of classes of primitive, integral binary quadratic forms of discriminant $d$;

$R(S,n):=\sum_{K\in S}R(K,n)$ where $S\subseteq H(d)$;

$N(n,d):=R(H(d),n)$;

$\omega(d):=$
$$
\begin{cases}
1 & \text{ if } d>0,\\
2 & \text{ if } d<-4,\\
4 & \text{ if } d=-4,\\
6 & \text{ if } d=-3;
\end{cases}
$$

$F(A,n):=$
$$
\begin{cases}
(R(I,n)-R(A,n))/\omega(d) & \text{ if } h(d)=2,3,\\
(R(I,n)-R(A^2,n))/\omega(d) & \text{ if } h(d)=4,\\
\end{cases}
$$
where $H(d)$ is cyclic with identity $I$ and generator $A$;

$(\frac{*}{*}):$ Kronecker symbol;

$\chi(n,d):=F(A,n)\prod_{(\frac{d/f^2}{p})=1}(1+ord_{p}n)^{-1}$ where $d<0$, $h(d)=2$, $d\neq-60$ and $(n,f)=1$.

From \cite{Cox1989}, \cite{Cohen1993}, \cite{Williams1991} and \cite{Williams1994}, we have the following lemma.

\begin{lemma}\label{lem:2}
Let $d<0$ be a discriminant of some binary quadratic form. Then

(1)$h(d)=1\Leftrightarrow$ $d=$ -3, -4, -7, -8, -11, -12, -16, -19, -27, -28, -43, -67, -163.

(2)$h(d)=2\Leftrightarrow$ $d=$ -15, -20, -24, -32, -35, -36, -40, -48, -51, -52, -60, -64, -72, -75, -88, -91, -99, -100, -112, -115, -123, -147, -148, -187, -232, -235, -267, -403, -427.

(3)$h(d)=3\Leftrightarrow$ $d=$ -23, -31, -44, -59, -76, -83, -92, -107, -108, -124, -139, -172, -211, -243, -268, -283, -307, -331, -379, -499, -547, -643, -652, -883, -907.

(4)$H(d)\simeq\mathbb{Z}_{4}\Leftrightarrow$ $d=$ -39, -55, -56, -63, -68, -80, -128, -136, -144, -155, -156, -171, -184, -196, -203, -208, -219, -220, -252, -256, -259, -275, -291, -292, -323, -328, -355, -363, -387, -388, -400, -475, -507, -568, -592, -603, -667, -723, -763, -772, -955, -1003, -1027, -1227, -1243, -1387, -1411, -1467, -1507, -1555.

\end{lemma}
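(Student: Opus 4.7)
The plan is to assemble these lists from known tables, using the correspondence between negative discriminants and orders in imaginary quadratic fields. Every $d<0$ with $d\equiv 0,1\pmod 4$ factors uniquely as $d=f^{2}d_{K}$, where $d_{K}$ is the fundamental discriminant of $K=\mathbb{Q}(\sqrt{d})$ and $f$ is the conductor of the corresponding order $\mathcal{O}_{d}$. Under this factorization $h(d)=h(\mathcal{O}_{d})$ is given by the Dedekind formula
\[
h(d)=\frac{h(d_{K})\,f}{[\mathcal{O}_{K}^{\times}:\mathcal{O}_{d}^{\times}]}\prod_{p\mid f}\left(1-\left(\frac{d_{K}}{p}\right)\frac{1}{p}\right),
\]
so that for each target value of $h(d)$ only finitely many pairs $(f,d_{K})$ can contribute, and a brute-force sweep suffices.

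For part (1) I would invoke the Heegner--Stark--Baker theorem, which exhibits the nine imaginary quadratic fields with $h(d_{K})=1$, and then sweep through conductors $f\geq 1$ for which the above product still evaluates to $1$; this produces exactly the thirteen discriminants listed, the extra four being $-12=2^{2}(-3)$, $-16=2^{2}(-4)$, $-27=3^{2}(-3)$, $-28=2^{2}(-7)$. For parts (2) and (3) I would start from the tables of imaginary quadratic fields with $h(d_{K})\in\{2,3\}$ compiled in \cite{Cohen1993} and \cite{Williams1991}, and again check which non-maximal orders preserve the class number. Since the factor $f\prod_{p\mid f}\bigl(1-(d_{K}/p)/p\bigr)$ grows linearly in $f$, only a handful of conductors need to be tested for each fundamental discriminant, after which the lists match the ones stated.

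For part (4) an additional filtering is needed: among all discriminants with $h(d)=4$ one must retain only those whose class group is cyclic, discarding the $(\mathbb{Z}/2)^{2}$ cases. Here I would appeal to genus theory, which determines the $2$-rank of $H(d)$ in terms of the number of distinct prime discriminants dividing $d$ (Gauss's formula), together with a correction at the conductor primes when $d$ is non-fundamental. Requiring this $2$-rank to equal $1$ selects $H(d)\simeq\mathbb{Z}_{4}$ and rules out $(\mathbb{Z}/2)^{2}$; cross-referencing with the explicit class group tables in \cite{Cox1989} and \cite{Williams1994} yields the fifty discriminants listed.

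The most delicate step is this final filtering. Tracking how the $2$-rank of the class group of $\mathcal{O}_{d}$ differs from that of $\mathcal{O}_{K}$ as $f$ varies, and making sure no non-fundamental discriminant is overlooked, is where genuine care (rather than table-lookup) is required. The remaining bookkeeping is routine once the Dedekind formula has reduced the problem to finitely many candidates, which is why the statement appears here as a citation of the sources \cite{Cox1989,Cohen1993,Williams1991,Williams1994} rather than being derived from scratch.
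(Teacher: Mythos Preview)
Your proposal is correct and aligns with the paper's treatment: the paper does not prove this lemma at all but simply cites it from \cite{Cox1989}, \cite{Cohen1993}, \cite{Williams1991}, and \cite{Williams1994}. Your sketch of the underlying argument (Heegner--Stark--Baker for the fundamental case, the Dedekind conductor formula to sweep non-maximal orders, and genus theory to separate $\mathbb{Z}_4$ from $(\mathbb{Z}/2)^2$) is exactly the standard route by which these tables are produced, so you have supplied more detail than the paper itself.
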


And we still need some more lemmas.

\begin{lemma}[Dirichlet]\label{lem:3}
When $(n,d)=1$, Dirichlet proved the following formula:
\begin{equation}\label{eq:lem3}
N(n,d)=\omega(d)\sum_{k\mid n}(\frac{d/f^2}{k})
\end{equation}
\end{lemma}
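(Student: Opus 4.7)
The plan is to deduce this classical Dirichlet formula via the well-known bijection between proper representations of $n$ by classes of binary quadratic forms of discriminant $d$ and (invertible) ideals of norm $n$ in the quadratic order $\mathcal{O}_d$ of discriminant $d$. Since the result is attributed to Dirichlet and is standard, I would organize the argument in four steps rather than rederive everything from scratch.

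First, I would reduce to counting ideals. Given $(a,b,c)$ of discriminant $d$, a representation $n=ax^2+bxy+cy^2$ with $\gcd(x,y)=1$ corresponds to the element $\alpha=ax+\tfrac{b+\sqrt{d}}{2}y$ in the proper $\mathcal{O}_d$-ideal $\mathfrak{a}=\bigl[a,\tfrac{-b+\sqrt{d}}{2}\bigr]$, which has $N(\alpha)=an$ and hence $(\alpha)=\mathfrak{a}\mathfrak{b}$ with $N(\mathfrak{b})=n$. This construction is well defined on classes and, together with its inverse, yields a bijection between primitive representations (modulo the $\omega(d)$ units of $\mathcal{O}_d$) and invertible ideals of norm $n$ in $\mathcal{O}_d$. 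The hypothesis $(n,d)=1$ forces $(n,f)=1$ since $f\mid d$, so every representation is automatically primitive (any common factor of $x$ and $y$ would divide $n$ and also divide $b^2-4ac=d$), and moreover every ideal of norm $n$ is invertible and coprime to the conductor.

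Second, I would pass from the order $\mathcal{O}_d$ to the maximal order $\mathcal{O}_{d/f^2}$. Since $(n,f)=1$, the natural map sending a proper $\mathcal{O}_d$-ideal $\mathfrak{b}$ to $\mathfrak{b}\mathcal{O}_{d/f^2}$ gives a bijection (preserving norm) between invertible $\mathcal{O}_d$-ideals of norm $n$ and $\mathcal{O}_{d/f^2}$-ideals of norm $n$. This reduces the problem to the fundamental discriminant case.

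Third, I would invoke the classical formula in the maximal order: the number of integral ideals of norm $n$ equals $\sum_{k\mid n}\bigl(\tfrac{d/f^2}{k}\bigr)$. This follows from multiplicativity together with the three splitting cases ($p$ split, inert, or ramified), where $\bigl(\tfrac{d/f^2}{p}\bigr)$ is $+1$, $-1$, or $0$ respectively, so that locally $\sum_{j=0}^{e}\bigl(\tfrac{d/f^2}{p^j}\bigr)$ matches the number of ideals of norm $p^e$. Combining with Steps 1--2 and multiplying by $\omega(d)$ to account for units gives the claimed formula $N(n,d)=\omega(d)\sum_{k\mid n}\bigl(\tfrac{d/f^2}{k}\bigr)$.

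The main obstacle in a self-contained writeup would be carefully verifying the ideal--form bijection for non-maximal orders and checking that the unit action on representations exactly produces the factor $\omega(d)$; however, given the attribution to Dirichlet and the availability of standard references such as Cox and Cohen already cited in the paper, I would simply quote the bijection and the local computation rather than reprove them here.
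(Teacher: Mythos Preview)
The paper does not prove this lemma at all; it is simply stated and attributed to Dirichlet as a classical result, so there is nothing to compare against on the paper's side. Your outline is the standard modern route via the forms--ideals dictionary, and the overall strategy is sound.

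There is, however, a genuine slip in Step~1. You assert that $(n,d)=1$ forces every representation to be primitive because ``any common factor of $x$ and $y$ would divide $n$ and also divide $b^2-4ac=d$.'' The second clause is false: a common factor $g$ of $x$ and $y$ gives $g^2\mid n$, but there is no reason for $g$ to divide the discriminant. For a concrete counterexample take $d=-4$, the form $x^2+y^2$, and $n=25$: then $(n,d)=1$, yet $(x,y)=(5,0)$ is a non-primitive representation and $5\nmid 4$. Since $N(n,d)$ counts \emph{all} representations, your restriction to $\gcd(x,y)=1$ would undercount.

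The fix is painless: the map $(x,y)\mapsto (\alpha)\mathfrak{a}^{-1}$ with $\alpha=ax+\tfrac{-b+\sqrt{d}}{2}\,y\in\mathfrak{a}$ is already an $\omega(d)$-to-$1$ correspondence between \emph{all} representations of $n$ by the given class and \emph{all} integral ideals of norm $n$ in the inverse ideal class (invertibility being automatic from $(n,f)=1$). There is no need to single out primitive representations, and once you drop that restriction your Steps~2 and~3 go through unchanged to yield $N(n,d)=\omega(d)\sum_{k\mid n}\bigl(\tfrac{d/f^2}{k}\bigr)$.
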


\begin{lemma}[{\cite[Remark 4.1]{Sun2006}}]\label{lem:4}
When $h(d)=1$,
\begin{equation}\label{eq:lem4}
R([1,\frac{1-(-1)^d}{2},\frac{1}{4}(\frac{1-(-1)^d}{2}-d)],n)=N(n,d).
\end{equation}
\end{lemma}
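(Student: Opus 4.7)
My plan is to show that the equality in Lemma \ref{lem:4} reduces directly to the definition of $N(n,d)$ once we identify the displayed form as the principal form of discriminant $d$. Set $\epsilon:=\frac{1-(-1)^d}{2}$, so $\epsilon=0$ if $d$ is even and $\epsilon=1$ if $d$ is odd; in both cases $\epsilon^2=\epsilon$. First I would verify that $(a,b,c):=(1,\epsilon,(\epsilon-d)/4)$ is an integral binary quadratic form. Since $d$ is a valid discriminant, we have $d\equiv 0,1\pmod 4$, which matches exactly the parity of $\epsilon$, so $\epsilon-d\equiv 0\pmod 4$ and $c\in\mathbb{Z}$. A quick discriminant check gives $b^2-4ac=\epsilon^2-(\epsilon-d)=d$ as required.

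Next I would observe that this form is primitive (its first coefficient is $1$) and that it represents $1$ via $(x,y)=(1,0)$. A primitive integral form that represents $1$ is equivalent to the principal form, so
\[
\Bigl[1,\ \tfrac{1-(-1)^d}{2},\ \tfrac{1}{4}\bigl(\tfrac{1-(-1)^d}{2}-d\bigr)\Bigr]=I,
\]
where $I$ denotes the identity class of the form class group $H(d)$.

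Finally, the hypothesis $h(d)=1$ says that $H(d)=\{I\}$. Applying the definitions given in the preliminaries,
\[
N(n,d)=R(H(d),n)=\sum_{K\in H(d)}R(K,n)=R(I,n),
\]
which is precisely $R\bigl([1,\epsilon,(\epsilon-d)/4],n\bigr)$. This yields \eqref{eq:lem4}.

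The argument is essentially a bookkeeping exercise rather than a substantive computation, so there is no genuine obstacle; the only mild subtlety is the parity case-split ensuring that $(\epsilon-d)/4$ is an integer, which is handled uniformly by writing the coefficient in terms of $\epsilon$ and noting $d\equiv\epsilon\pmod 4$.
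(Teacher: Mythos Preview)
Your argument is correct. The paper does not supply a proof of this lemma at all; it simply quotes it from \cite[Remark 4.1]{Sun2006}. Your unpacking of the definitions---checking that the displayed form is integral of discriminant $d$, identifying it as the principal class $I$, and then using $h(d)=1$ to collapse $N(n,d)=\sum_{K\in H(d)}R(K,n)$ to $R(I,n)$---is exactly the intended content of the remark and goes through without issue.
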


\begin{lemma}[{\cite[Theorem 9.3]{Sun2006}}]\label{lem:5}
Let $d<0$ be a fundamental discriminant and $d\neq-60$. Suppose $h(d)=2$ and $(\frac{d}{p})=0,1$ for every prime p with $2\nmid ord_{p}(n)$. Then
\begin{equation}\label{eq:lem5.1}
R(I,n)=(1+\chi(n,d))\prod_{(\frac{d}{p})=1}(1+ord_{p}n)
\end{equation}
and
\begin{equation}\label{eq:lem5.2}
R(A,n)=(1-\chi(n,d))\prod_{(\frac{d}{p})=1}(1+ord_{p}n).
\end{equation}
\end{lemma}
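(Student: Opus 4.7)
The plan is to derive both identities by combining Dirichlet's class-number formula (Lemma \ref{lem:3}) with the very definition of $\chi(n,d)$ in terms of $F(A,n)$. Since $h(d)=2$ and $d<0$, the form class group is cyclic of order two, $H(d)=\{I,A\}$ with $A^{2}=I$, so the sum over classes collapses to two terms:
\begin{equation*}
N(n,d)=R(I,n)+R(A,n).
\end{equation*}

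First I would compute $N(n,d)$ explicitly. Because $d$ is a fundamental discriminant, the conductor is $f=1$, and Lemma \ref{lem:3} gives $N(n,d)=\omega(d)\sum_{k\mid n}(\tfrac{d}{k})$. The Kronecker symbol $k\mapsto(\tfrac{d}{k})$ is completely multiplicative, so the divisor sum factors as $\prod_{p\mid n}\sum_{j=0}^{\mathrm{ord}_{p}n}(\tfrac{d}{p})^{j}$. The hypothesis $(\tfrac{d}{p})\neq-1$ whenever $\mathrm{ord}_{p}n$ is odd lets me evaluate each local factor: it is $1$ when $p\mid d$ (so $(\tfrac{d}{p})=0$) or when $(\tfrac{d}{p})=-1$ (then $\mathrm{ord}_{p}n$ is even and the geometric sum telescopes to $1$), and it is $1+\mathrm{ord}_{p}n$ when $(\tfrac{d}{p})=1$. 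Collecting the factors yields
\begin{equation*}
R(I,n)+R(A,n)=\omega(d)\prod_{(\tfrac{d}{p})=1}(1+\mathrm{ord}_{p}n).
\end{equation*}

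Next I would unwind the definition of $\chi(n,d)$. Since $h(d)=2$, $F(A,n)=(R(I,n)-R(A,n))/\omega(d)$, so
\begin{equation*}
R(I,n)-R(A,n)=\omega(d)\,\chi(n,d)\prod_{(\tfrac{d}{p})=1}(1+\mathrm{ord}_{p}n).
\end{equation*}
Adding and subtracting the two displays and dividing by $2$ produces the claimed formulas, once I observe that $\omega(d)=2$ here: the only discriminants with larger $\omega$ are $-3$ and $-4$, both of which have $h(d)=1$ by Lemma \ref{lem:2}.

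The step I expect to be delicate is the passage from Dirichlet's formula to the multiplicative evaluation of $\sum_{k\mid n}(\tfrac{d}{k})$. Lemma \ref{lem:3} as stated assumes $(n,d)=1$, whereas the hypothesis of Lemma \ref{lem:5} explicitly permits primes $p\mid d$ with $2\mid\mathrm{ord}_{p}n$, so either a slightly strengthened form of Dirichlet's formula must be invoked, or the functions $R(I,\cdot)$ and $R(A,\cdot)$ must be shown to be multiplicative on $n$ so that one reduces to prime powers coprime to $d$ followed by a separate handling at the ramified primes. The exclusion $d\neq-60$ is a warning that this generic genus-theoretic argument degenerates for particular discriminants, so confirming that $-60$ is the only exception in the $h(d)=2$ list is where the bookkeeping becomes most subtle.
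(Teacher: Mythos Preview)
The paper does not give its own proof of this lemma; it is cited directly from \cite[Theorem~9.3]{Sun2006} and used as a black box in the proof of Theorem~\ref{th:3}. There is therefore nothing in the paper to compare your argument against.

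For what it is worth, your derivation is the natural one and is correct in outline: with $H(d)=\{I,A\}$ the two linear relations $R(I,n)+R(A,n)=N(n,d)$ and $R(I,n)-R(A,n)=\omega(d)F(A,n)=\omega(d)\,\chi(n,d)\prod_{(\frac{d}{p})=1}(1+\mathrm{ord}_{p}n)$ combine, and $\omega(d)=2$ since every $h(d)=2$ discriminant in Lemma~\ref{lem:2} satisfies $d<-4$. The concern you raise about the coprimality hypothesis in Lemma~\ref{lem:3} is legitimate relative to how that lemma is stated in this paper, but in the source \cite{Sun2006} the underlying mass formula is established without that restriction, so the gap is only an artefact of the summary here. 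Your reading of the exclusion $d\neq-60$ is slightly off: it is not that the linear-algebra step degenerates, but simply that $\chi(n,d)$ as defined in this paper is not defined at $d=-60$; in \cite{Sun2006} that discriminant is treated separately because its genus structure differs from the remaining $h(d)=2$ cases.
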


\begin{lemma}[{\cite{Sun2006}}]\label{lemma:6}
Let $d<0$ be a fundamental discriminant and $H(d)\simeq \mathbb{Z}_{4}$. Then
$$F(I,n)=\frac{N(n,d)}{\omega(d)};$$

$$F(A,n)=(-1)^{\mu}m\prod_{p\notin R(I)\cup R(A^2)}\frac{1+(-1)^{ord_{p}n}}{2}\cdot \prod_{p\mid m}(1-\frac{1}{p}(\frac{d}{p}))\cdot \prod_{p\in R(I)\cup R(A^2),p\nmid d}(1+ord_{p}(n))$$ where $$\mu=\sum_{p\in R(A),ord_{p}n\equiv2\pmod4}1+\sum_{p\in R(A^2),ord_{p}n\equiv1\pmod2}1;$$
and
$$F(A^2,n)=(-1)^{\sum_{p\in R(A)}ord_{p}n}\cdot F(I,n).$$
\end{lemma}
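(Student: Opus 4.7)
The plan is to use characters on the form class group $H(d) = \langle A\rangle \simeq \mathbb{Z}_4$ together with the theta series attached to each class. The starting observation is that $R(A, n) = R(A^{-1}, n) = R(A^3, n)$, since a form and its inverse represent the same integers. Combined with Lemma~\ref{lem:3} this gives
\begin{equation*}
R(I,n) + 2R(A,n) + R(A^2,n) = N(n,d),
\end{equation*}
from which the formula $F(I,n) = N(n,d)/\omega(d)$ drops out (it is the value at the trivial character, in place of $A$, in the definition of $F$).

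The core of the proof is the formula for $F(A,n)$. I would form the twisted theta series $\Theta_\psi = \sum_{K \in H(d)} \psi(K)\theta_K$ against the quartic character $\psi$ on $H(d)$ with $\psi(A) = i$; since $\theta_A = \theta_{A^3}$, the $n$th Fourier coefficient is exactly $R(I,n) - R(A^2,n) = \omega(d)F(A,n)$. By Hecke's theorem, $\Theta_\psi$ is the weight-one form attached to the Grossencharakter of $\mathbb{Q}(\sqrt{d})$ induced by $\psi$, whose Dirichlet series has an Euler product. Evaluating the local factor at each prime $p$ according to whether $p$ is inert, ramified, or split with a prime above it lying in $R(I)$, $R(A)\cup R(A^3)$, or $R(A^2)$ reproduces the stated product: a prime $p \in R(I)$ with $p \nmid d$ contributes $1+\mathrm{ord}_p n$, a prime in $R(A^2)$ or $R(A)$ forces $\mathrm{ord}_p n$ into the correct parity via the factor $\tfrac{1+(-1)^{\mathrm{ord}_p n}}{2}$, and the quartic character produces the signs which collect into $(-1)^\mu$. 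The correction $\prod_{p\mid m}(1-\tfrac{1}{p}(\tfrac{d}{p}))$ accounts for local densities at the primes dividing the $d$-coprime part $m$ of $n$. The identity $F(A^2,n) = (-1)^{\sum_{p \in R(A)} \mathrm{ord}_p n} F(I,n)$ then follows by comparing the Euler product of the trivial combination $\theta_I+\theta_A+\theta_{A^2}+\theta_{A^3}$ with that of the genus-character combination $\theta_I-\theta_A+\theta_{A^2}-\theta_{A^3}$: the two local factors agree at $p \in R(I)\cup R(A^2)$, while at $p\in R(A)$ the genus character contributes the extra $(-1)^{\mathrm{ord}_p n}$.

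The main obstacle is the sign $(-1)^\mu$. At a split prime $p \in R(A)$, the local factor of the quartic Euler product is of the form $\sum_{j=0}^{e} i^{j}(-i)^{e-j}$ with $e=\mathrm{ord}_p n$, whose value as $e$ runs through $0,1,2,3 \pmod 4$ traces $1,0,-1,0$; at $p\in R(A^2)$ the analogous sum is $\sum_{j=0}^{e}(-1)^{j}$, which is $0$ for odd $e$ and $\pm 1$ for even $e$ depending on residue mod $4$. The delicate step is showing that the two-part description of $\mu$ in the statement, one sum over $p\in R(A)$ with $\mathrm{ord}_p n \equiv 2 \pmod 4$ and one over $p\in R(A^2)$ with $\mathrm{ord}_p n$ odd, captures every sign contribution exactly once; I would verify this prime-by-prime and check that the remaining case distinctions (inert primes, primes dividing $d$, primes in $R(I)$ that happen to divide $n$ to an odd or even power) contribute only the factors already listed, with no hidden cancellation or duplication.
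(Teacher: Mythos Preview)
The paper does not prove this lemma: it is quoted directly from Sun and Williams \cite{Sun2006}, as the attribution in the lemma heading indicates, and no argument whatsoever is supplied in the present paper. So there is no proof here to compare your proposal against.

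For what it is worth, your outline is essentially the strategy one finds in the source: orthogonality of the characters of $H(d)\simeq\mathbb{Z}_4$ inverts the system $\{R(K,n)\}$ into character-twisted combinations whose Dirichlet series are Hecke $L$-functions, and reading the Euler factors prime by prime recovers the three displayed formulas. One computational slip in your sketch: at a split prime $p\in R(A^2)$ both prime ideals above $p$ lie in the self-inverse class $A^2$, so under the quartic character the local factor is $(1+p^{-s})^{-2}$, whose $p^e$-coefficient is $(-1)^e(e+1)$, not $\sum_{j=0}^{e}(-1)^j$. This is exactly why such a prime contributes the factor $1+\mathrm{ord}_p n$ to the product \emph{and} the sign $(-1)^{\mathrm{ord}_p n}$ to $\mu$, rather than forcing $\mathrm{ord}_p n$ to be even as your parity remark suggests.
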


\begin{lemma}[{\cite[Theorem 11.3]{Sun2006}}]\label{lem:7}
Let $d$ be a negative fundamental discriminant and $H(d)\simeq \mathbb{Z}_{4}$. Then
\begin{equation}\label{eq:lem7.1}
R(I,n)=\omega(d)(F(I,n)+2F(A,n)+F(A^2,n))/4,
\end{equation}
\begin{equation}\label{eq:lem7.2}
R(A^2,n)=\omega(d)(F(I,n)-2F(A,n)+F(A^2,n))/4.
\end{equation}
\end{lemma}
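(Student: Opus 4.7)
The plan is to reduce the two identities to a small linear system via character theory on the cyclic group $H(d) \simeq \mathbb{Z}_4$, invoking only one standard fact about binary quadratic forms. Write $H(d) = \{I, A, A^2, A^3\}$ with $A^3 = A^{-1}$. Since $(a,b,c)$ and $(a,-b,c)$ represent the same integers and $[a,-b,c]$ is the inverse class of $[a,b,c]$, every class and its inverse have equal representation numbers; in particular $R(A,n) = R(A^3,n)$. This is the only input about the forms themselves that the argument will need.

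Next I would assemble three linear relations among $R(I,n)$, $R(A,n)$, $R(A^2,n)$. Using $R(A,n)=R(A^3,n)$ together with $N(n,d) = \sum_{K \in H(d)} R(K,n)$ and the identification $F(I,n) = N(n,d)/\omega(d)$ from Lemma \ref{lemma:6}, the first relation is
\[
\omega(d)\,F(I,n) \;=\; R(I,n) + 2R(A,n) + R(A^2,n).
\]
The case-$h(d)=4$ definition of $F(A,n)$ in the preliminaries already provides the second,
\[
\omega(d)\,F(A,n) \;=\; R(I,n) - R(A^2,n).
\]
For the third I would identify $\omega(d)F(A^2,n)$ with the character sum $\sum_K \chi_2(K)\,R(K,n)$, where $\chi_2$ is the unique order-two character of $\mathbb{Z}_4$; the symmetry $R(A,n)=R(A^3,n)$ collapses this to
\[
\omega(d)\,F(A^2,n) \;=\; R(I,n) - 2R(A,n) + R(A^2,n).
\]
Adding the first and third relations eliminates $R(A,n)$ and gives $R(I,n)+R(A^2,n) = \omega(d)(F(I,n)+F(A^2,n))/2$. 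Combining this with the second by addition and subtraction then produces (\ref{eq:lem7.1}) and (\ref{eq:lem7.2}) directly, with no further work.

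The main obstacle is justifying the third relation, since $F(A^2,n)$ is not explicitly defined in the excerpt: Lemma \ref{lemma:6} records a closed-form \emph{value} for it but not its \emph{definition}. I would handle this by going back to \cite{Sun2006}, where the $F$-functions on a cyclic class group are introduced precisely as normalised character sums against the irreducible characters of $H(d)$, so that $\omega(d)\,F(A^2,n) = \sum_K \chi_2(K)\,R(K,n)$ holds by construction. Once this bookkeeping point is settled, the lemma is a one-line consequence of Fourier inversion on $\mathbb{Z}_4$.
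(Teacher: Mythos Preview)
The paper does not supply its own proof of this lemma: it is quoted verbatim from \cite[Theorem 11.3]{Sun2006} and used as a black box, so there is nothing in the paper to compare your argument against.

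That said, your approach is correct and is in fact the standard derivation. The three relations you write down are precisely the Fourier coefficients of the class function $K\mapsto R(K,n)$ against the trivial character, an order-four character, and the order-two character of $\mathbb{Z}_4$, respectively, after the symmetry $R(A,n)=R(A^{-1},n)$ collapses the complex contributions. Solving the resulting $3\times 3$ linear system is then immediate, exactly as you outline. Your identification of the one soft spot is also accurate: the excerpt only records a closed formula for $F(A^2,n)$ from Lemma~\ref{lemma:6} without stating its defining relation, so one must return to \cite{Sun2006} to confirm that $\omega(d)F(A^2,n)$ is by definition the $\chi_2$-sum $\sum_K \chi_2(K)R(K,n)$. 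Once that is granted, the lemma is indeed just Fourier inversion on $\mathbb{Z}_4$.
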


\section{Proofs of the theorems}

\begin{proof}[Proof of Theorem \ref{th:1}]

First, we prove that if $(a,b,c)$ is an integer solution to (\ref{eq:main}), then $(|a-b|,|2(m-2)c-(m-4)|)$ is a nonnegative solution to (\ref{eq:mainqua}).
\begin{align*}
 & 2(m-2)(a-b)^2+t(2(m-2)c-(m-4))^2\\
 &= 2(m-2)((a+b)^2-4ab)+t(2(m-2)c-(m-4))^2\\
 &= 2(m-2)(n^2-4tP(m,c))+4t(m-2)c((m-2)c-(m-4))+t(m-4)^2\\
 &= 2(m-2)n^2-8t(m-2)P(m,c)+8t(m-2)P(m,c)+t(m-4)^2\\
 &= 2(m-2)n^2+t(m-4)^2.
\end{align*}

And if there is another integer solution $(a',b',c')$ such that $|a-b|=|a'-b'|$ and $|2(m-2)c-(m-4)|=|2(m-2)c'-(m-4)|$, then $a=a'$ or $b=a'$. Hence, $r_{m,t}(n)\leq r'_{m,t}(n)$. And it's obvious that $(x,y)=(n,|m-4|)$ is a solution to (\ref{eq:mainqua}). However $a$, $b>1$, so $r_{m,t}(n)\leq r'_{m,t}(n)-1$. By Lemma \ref{lem:1}, if $2(m-2)n^2+t(m-4)^2$ is a prime, then $r'_{m,t}(n)=1$ and $r_{m,t}(n)=0$, namely, (\ref{eq:main}) is unsolvable.

\end{proof}

\begin{proof}[Proof of Theorem \ref{th:2}]

Let $(x,y)$ be an integer solution to (\ref{eq:mainqua}) and $p$ be an odd prime.\\
1. Case $t=1$ and $m=3$ or $p+2$

If $n<x$, then $y<|m-4|$, which is impossible for $m=3$ and $5$. For $m>5$, $y<|m-4|$, so $0\leq y\leq m-5$. If $y=0$, then $2pn^2+(p-2)^2=2px^2$, which contradicts $p\nmid (p-2)^2$. Hence $1\leq y \leq m-5$. According to (\ref{eq:mainqua}),
\begin{equation}\label{eq:quat=1}
2(m-2)n^2+(m-4)^2=2(m-2)x^2+y^2.
\end{equation}
Hence $2(m-2)(n+x)(n-x)=(y+m-4)(y-m+4)$. And $2p=2m-4$ divides $y+m-4$ or $m-4-y$. However $m-3\leq y+m-4\leq 2m-9$ and $1\leq m-4-y\leq m-5$. Here $m-5<2m-9<2m-4$. A contradiction. Thus $n\neq x$.

In the case of $n>x$, we prove that $((n+x)/2,(n-x)/2,c)$ is an positive integer solution to (\ref{eq:main}) where $c=\frac{(m-4)+y}{2(m-2)}$ or $\frac{(m-4)-y}{2(m-2)}$ such that $c$ is an integer ($c>0$ if $m=3$). By (\ref{eq:quat=1}), $n$ and $x$ have the same parity, so $(n+x)/2$ and $(n-x)/2$ are positive integers. As we discussed in the case $n>x$, $2p=2m-4$ divides $m-4+y$ or $m-4-y$, which implies that either of $\frac{(m-4)+y}{2(m-2)}$ or $\frac{(m-4)-y}{2(m-2)}$ is an integer. Hence we can make $c$ an integer such that $y=|2(m-2)c-(m-4)|$. Now, we verify that $((n+x)/2,(n-x)/2,c)$ is a solution to (\ref{eq:main}). $(n+x)/2+(n-x)/2=n$ and $\frac{1}{2}(n+x)\cdot \frac{1}{2}(n-x)$ $=\frac{y^2-(m-4)^2}{8(m-2)}$ $=\frac{(2(m-2)c-(m-4))^2-(m-4)^2}{8(m-2)}$ $=\frac{c}{2}((m-2)c-(m-4))$ $=P(m,c)$.
Moreover, $(x,y)\mapsto((n+x)/2,(n-x)/2,c)$ is injective.

In the case of $n=x$, $(n,|m-4|)$ is an integer solution, but $((n+x)/2,(n-x)/2,c)$ can not provide a positive integer solution for (\ref{eq:main}) because $(n-x)/2=0$.

Thus, $r_{m,t}\neq r'_{m,t}-1$. And by the proof of Theorem \ref{th:1}, $r_{m,t} = r'_{m,t}-1$.\\
2. Case $t=2$ and $m=3$ or $2p+2$

If $n<x$, then $y<|m-4|$, which is impossible for $m=3$. For $m>5$, $y<|m-4|$, so $0\leq y\leq m-5$. If $y=0$, then $2pn^2+4(p-1)^2=2px^2$, which contradicts $p\nmid 4(p-1)^2$. Hence $1\leq y \leq m-5$. According to (\ref{eq:mainqua}),
\begin{equation}\label{eq:quat=2}
(m-2)n^2+(m-4)^2=(m-2)x^2+y^2.
\end{equation}
Hence $(m-2)(n+x)(n-x)=(y+m-4)(y-m+4)$. And $2p=2m-2$ divides $y+m-4$ or $m-4-y$. However $m-3\leq y+m-4\leq 2m-9$ and $1\leq m-4-y\leq m-5$. Here $m-5<2m-9<2m-4$. A contradiction. Thus $n\neq x$.

In the case of $n>x$, we proceed our discussion in two cases.

If $m=3$, then we have $n^{2}+1=x^{2}+y^{2}$. Since $n^{2}+1\equiv x^{2}+y^{2}\pmod 4$, we have $n^2\equiv x^2\pmod 4$ or $n^2\equiv y^2\pmod 4$. Transposing $x$ and $y$ if necessary, we may assume $n^2\equiv x^2\pmod 4$, which implies that $n\equiv x\pmod2$ and $y$ is odd. Now we prove that $((n+x)/2,(n-x)/2,(y-1)/2)$ is a positive integer solution to (\ref{eq:main}). $(n+x)/2+(n-x)/2=n$ and $\frac{1}{2}(n+x)\cdot \frac{1}{2}(n-x)$ $=\frac{1}{4}(y^2-1)$ $\frac{1}{4}((2c+1)^2-1)$ $=2P(3,c)$.

 Let $m=2p+2$, we prove that $((n+x)/2,(n-x)/2,c)$ is an positive integer solution to (\ref{eq:main}) where $c=\frac{(m-4)+y}{2(m-2)}$ or $\frac{(m-4)-y}{2(m-2)}$ such that $c$ is an integer. By (\ref{eq:quat=2}), $n$ and $x$ have the same parity, so $(n+x)/2$ and $(n-x)/2$ are positive integers. As we discussed in the case $n>x$, $2p=2m-4$ divides $m-4+y$ or $m-4-y$, which implies that either of $\frac{(m-4)+y}{2(m-2)}$ or $\frac{(m-4)-y}{2(m-2)}$ is an integer. Hence we can make $c$ an integer such that $y=|2(m-2)c-(m-4)|$. Now, we verify that $((n+x)/2,(n-x)/2,c)$ is a solution to (\ref{eq:main}). $(n+x)/2+(n-x)/2=n$ and $\frac{1}{2}(n+x)\cdot \frac{1}{2}(n-x)$ $=\frac{y^2-(m-4)^2}{4(m-2)}$ $=\frac{(2(m-2)c-(m-4))^2-(m-4)^2}{4(m-2)}$ $=c((m-2)c-(m-4))$ $=2P(m,c)$.
Moreover, $(x,y)\mapsto((n+x)/2,(n-x)/2,c)$ is injective.

In the case of $n=x$, $(n,|m-4|)$ is an integer solution, but $((n+x)/2,(n-x)/2,c)$ can not provide a positive integer solution for (\ref{eq:main}) because $(n-x)/2=0$.

Thus, $r_{m,t}\neq r'_{m,t}-1$. And by the proof of Theorem \ref{th:1}, $r_{m,t} = r'_{m,t}-1$.\\
3. Case $t$ is an odd prime and $t\neq m-2$, with $m=3$ or $p+2$

It's similar to the proof of the above two cases. Since we can verify that $((n+x)/2,(n-x)/2,c)$ is a positive integer solution to (\ref{eq:main}) where $c=\frac{(m-4)+y}{2(m-2)}$ or $\frac{(m-4)-y}{2(m-2)}$ such that $c$ is an integer ($c>0$ if $m=3$) and the case $n\leq x$ can only provide one solution $(n,|m-4|)$, we have $r_{m,t} = r'_{m,t}-1$.
\end{proof}

\begin{proof}[Proof of Theorem \ref{th:3}]

Let $\mathbb{S}$ be the set of square numbers. By the definition of $r'_{m,t}(n)$, we have
\begin{align*}
&r'_{m,t}(n)=\\
&\begin{cases}
\frac{1}{4}R([t,0,2(m-2)],2(m-2)n^2+t(m-4)^2)& \text{if } 2(m-2)n^2+t(m-4)^2 \notin \mathbb{S};\\
\frac{1}{4}\{R([t,0,2(m-2)],2(m-2)n^2+t(m-4)^2)+2\}& \text{if } 2(m-2)n^2+t(m-4)^2 \in \mathbb{S}.
\end{cases}
\end{align*}

Under the condition of Theorem \ref{th:2}, we can obtain the value of $r_{m,t}(n)$ by applying the lemmas about the binary quadratic forms.\\
1. Case $h(d)=1$ ($d=-4$ or $-8$)

If $(m,t)\in \{(3,1),(3,2)\}$, then $h(d)=1$. To make this paper brief, we omit the proof of $d=-4$ which is similar to the case $d=-8$. Let $d=-8$. Then $(m,t)=(3,1)$. Applying Lemma \ref{lem:3} and Lemma \ref{lem:4}, we have
\begin{align*}
&r_{3,1}(n)=r'_{3,1}(n)-1\\=
&\begin{cases}
\frac{1}{4}R([1,0,2],2n^2+1)-1& \text{if } 2n^2+1 \notin \mathbb{S};\\
\frac{1}{4}\{R([1,0,2],2n^2+1)+2\}-1& \text{if } 2n^2+1 \in \mathbb{S}.
\end{cases}
\\=
&\begin{cases}
\frac{1}{4}N(2n^2+1,-8)-1& \text{if } 2n^2+1 \notin \mathbb{S};\\
\frac{1}{4}\{N(2n^2+1,-8)+2\}-1& \text{if } 2n^2+1 \in \mathbb{S}.
\end{cases}
\\=
&\begin{cases}
\frac{1}{2}\sum_{k\mid 2n^2+1}(\frac{-2}{k})-1& \text{if } 2n^2+1 \notin \mathbb{S};\\
\frac{1}{2}\sum_{k\mid 2n^2+1}(\frac{-2}{k})-\frac{1}{2}& \text{if } 2n^2+1 \in \mathbb{S}.
\end{cases}
\end{align*}
And we notice that for any prime divisor $q$ of $2n^2+1$, we have $2n^2+1\equiv 0\pmod q$. It follows that $(\frac{-2}{q})=1$ and
\begin{align*}
r_{3,1}(n)&=
\begin{cases}
\frac{1}{2}\prod_{p\mid 2n^2+1}(1+ord_{p}(2n^2+1))-1& \text{if } 2n^2+1 \notin \mathbb{S};\\
\frac{1}{2}\prod_{p\mid 2n^2+1}(1+ord_{p}(2n^2+1))-\frac{1}{2}& \text{if } 2n^2+1 \in \mathbb{S}.
\end{cases}
\\&=
\lfloor \{d(2n^2+1)-1\}/2 \rfloor.
\end{align*}
\\2. Case $h(d)=2$ ($d=-24$, $-40$, $-880$ or $-232$)

If $(m,t)\in \{(5,1),(7,1),(13,1),(31,1),(8,2),(12,2),(24,2),(60,2),(3,3),(3,5),(3,11)$, $(3,29)\}$, then $h(d)=2$. We only give the proof of $(m,t)=(5,1)$ for the others are similar. As an application of Lemma \ref{lem:5}, we have
\begin{align*}
r_{5,1}(n)&=
\begin{cases}
\frac{1}{4}(1+(-1)^{ord_{3}(6n^2+1)}(\frac{6n^2+1}{3}))\prod_{p}(1+ord_{p}(6n^2+1))-1& \text{if } 6n^2+1 \notin \mathbb{S};\\
\frac{1}{4}(1+(-1)^{ord_{3}(6n^2+1)}(\frac{6n^2+1}{3}))\prod_{p}(1+ord_{p}(6n^2+1))-\frac{1}{2}& \text{if } 6n^2+1 \in \mathbb{S}.
\end{cases}
\\&=
\begin{cases}
\frac{1}{2}\prod_{p}(1+ord_{p}(6n^2+1))-1& \text{if } 6n^2+1 \notin \mathbb{S};\\
\frac{1}{2}\prod_{p}(1+ord_{p}(6n^2+1))-\frac{1}{2}& \text{if } 6n^2+1 \in \mathbb{S}.
\end{cases}
\\&=
\lfloor \{d(6n^2+1)-1\}/2 \rfloor.
\end{align*}
\end{proof}

\section{The Case $H(d)\simeq\mathbb{Z}_{4}$}
Applying Lemma \ref{lem:7}, we have some complex results.
\begin{theorem}\label{th:4}
Let $d$ be a negative fundamental discriminant such that $H(d)\simeq\mathbb{Z}_{4}$.

(1) Let $z=2(m-2)n^2+(m-4)^2$. If $(m,t)\in\{(9,1),(19,1),(25,1),(43,1),(73,1)\}$, then
\begin{equation}\label{th:4.1}
r_{m,t}(n)=
\begin{cases}
\frac{1}{8}(F(I,z)+2F(A,z)+F(A^2,z))-1& \text{if } z \notin \mathbb{S};\\
\frac{1}{8}(F(I,z)+2F(A,z)+F(A^2,z))-\frac{1}{2}& \text{if } z \in \mathbb{S}.
\end{cases}
\end{equation}

(2) Let $z=\{(m-2)n^2+(m-1)^2\}/2$. If $(m,t)\in\{(16,2),(36,2),(48,2),(84,2),(144,2)\}$, then
\begin{equation}\label{th:4.2}
r_{m,t}(n)=
\begin{cases}
\frac{1}{8}(F(I,z)+2F(A,z)+F(A^2,z))-1& \text{if } 2z \notin \mathbb{S};\\
\frac{1}{8}(F(I,z)+2F(A,z)+F(A^2),z)-\frac{1}{2}& \text{if } 2z \in \mathbb{S}.
\end{cases}
\end{equation}
\end{theorem}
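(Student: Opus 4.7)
The plan is to mimic the proof of Theorem~\ref{th:3}, replacing the $h(d)\le 2$ formulas with Lemma~\ref{lem:7}. First, I verify that Theorem~\ref{th:2} applies to each listed pair. For part~(1), each $m-2\in\{7,17,23,41,71\}$ is an odd prime, so we are in case~(1) of Theorem~\ref{th:2} with $t=1$ and $m=p+2$. For part~(2), each $(m-2)/2\in\{7,17,23,41,71\}$ is an odd prime, so we are in case~(2) with $t=2$ and $m=2p+2$. In both situations Theorem~\ref{th:2} gives $r_{m,t}(n)=r'_{m,t}(n)-1$.

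Next, following the setup used in the proof of Theorem~\ref{th:3}, I rewrite $r'_{m,t}(n)$ as a count of representations by the form $(t,0,2(m-2))$ of $w=2(m-2)n^{2}+t(m-4)^{2}$, divided by the number of sign combinations: $\tfrac{1}{4}R((t,0,2(m-2)),w)$ in general, with an extra $+\tfrac{1}{2}$ correction in the presence of an axis representation. For $t=1$, the form $(1,0,2(m-2))$ is already the primitive principal form of discriminant $-8(m-2)$. For $t=2$, the form $(2,0,2(m-2))$ has content~$2$, and since $w$ is manifestly even, representations of $w$ by this form correspond bijectively to representations of $w/2=(m-2)n^{2}+(m-4)^{2}$ by the primitive principal form $(1,0,m-2)$ of discriminant $-4(m-2)$. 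In both parts the resulting discriminants lie in $\{-56,-136,-184,-328,-568\}$, a sublist of Lemma~\ref{lem:2}(4); each equals $4$ times a squarefree integer congruent to $2\bmod 4$ and is therefore a fundamental discriminant, so Lemma~\ref{lem:7} applies.

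Using $\omega(d)=2$ for $d<-4$, Lemma~\ref{lem:7} then gives
\[
R(I,z)=\tfrac{1}{2}\bigl(F(I,z)+2F(A,z)+F(A^{2},z)\bigr),
\]
where $z$ is $w$ in part~(1) and $w/2$ in part~(2). Substituting this into the expression for $r'_{m,t}(n)$ and subtracting the trivial solution $(x,y)=(n,|m-4|)$ yields exactly the formulas (\ref{th:4.1}) and (\ref{th:4.2}), with the two cases of each formula reflecting whether the equation in question admits an axis representation.

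The main obstacle, I expect, is the content-$2$ reduction in part~(2): one must verify that the bijection between representations of $w$ by $(2,0,2(m-2))$ and representations of $w/2$ by $(1,0,m-2)$ preserves the axis-representation count, since that is what determines whether the $+\tfrac{1}{2}$ correction applies. Once this bookkeeping is in hand, the rest is a direct substitution of Lemma~\ref{lem:7} into the identities already established in the proof of Theorem~\ref{th:3}.
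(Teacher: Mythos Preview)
Your approach is correct and is exactly the paper's: the paper gives no proof beyond the sentence ``Applying Lemma~\ref{lem:7}, we have some complex results'' together with the table, and your proposal fills in precisely those details (Theorem~\ref{th:2} to obtain $r_{m,t}=r'_{m,t}-1$, the reduction of $r'_{m,t}$ to $\tfrac14 R(I,z)$ as in the proof of Theorem~\ref{th:3}, and then Lemma~\ref{lem:7} with $\omega(d)=2$).

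One caveat on part~(2): your content-$2$ reduction gives $z=w/2=(m-2)n^{2}+(m-4)^{2}$, whereas the statement has $z=\{(m-2)n^{2}+(m-1)^{2}\}/2$ and the table records half of your value (e.g.\ $7n^{2}+72$ for $m=16$, not $14n^{2}+144$). The exponent $m-1$ is evidently a typo for $m-4$; and with the table's halved $z$ one actually lands on $R(A^{2},z)$ rather than $R(I,z)$, because of the further parity reduction $R([1,0,2p],2z)=R([2,0,p],z)$, so the sign on the middle term $2F(A,z)$ would flip. A quick check at $m=16$, $n=1$ (where $r_{16,2}(1)=0$ but $R(I,79)=0$, $R(I,158)=4$) confirms that your $z$ is the one that makes formula~(\ref{th:4.2}) come out right as written. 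So the discrepancy is an internal inconsistency in the paper, not a gap in your argument.
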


The following table indicates the values of $F(I)$ in Theorem \ref{th:4}.

\begin{tabular}{|c|c|c|c|c|c|c|c|}
  \hline
  $m$ & $t$ & $d$ & $z$ & $I$ & $A$ & $A^2$ & $F(I)$ \\
  9 & 1 & -56 & $14n^2+25$ & $[1,0,14]$ & $[3,2,5]$ & $[2,0,7]$ & $d(z)$ \\
  19 & 1 & -136 & $34n^2+225$ & $[1,0,34]$ & $[5,2,7]$ & $[2,0,17]$ & $d_{\{3\}}(z)$ \\
  25 & 1 & -184 & $46n^2+441$ & $[1,0,46]$ & $[5,4,10]$ & $[2,0,23]$ & $d_{\{3,7\}}(z)$ \\
  43 & 1 & -328 & $82n^2+1521$ & $[1,0,82]$ & $[7,6,13]$ & $[2,0,41]$ & $d_{\{3\}}(z)$ \\
  73 & 1 & -568 & $142n^2+4761$ & $[1,0,142]$ & $[11,2,13]$ & $[2,0,71]$ & $d_{\{3,23\}}(z)$ \\
  16 & 2 & -56 & $7n^2+72$ & $[1,0,14]$ & $[3,2,5]$ & $[2,0,7]$ & $d_{\{2\}}(z)$ \\
  36 & 2 & -136 & $17n^2+512$ & $[1,0,34]$ & $[5,2,7]$ & $[2,0,17]$ & $d_{\{2\}}(z)$ \\
  48 & 2 & -184 & $23n^2+968$ & $[1,0,46]$ & $[5,4,10]$ & $[2,0,23]$ & $d_{\{2\}}(z)$ \\
  84 & 2 & -328 & $41n^2+3200$ & $[1,0,82]$ & $[7,6,13]$ & $[2,0,41]$ & $d_{\{2,5\}}(z)$ \\
  144 & 2 & -568 & $71n^2+9800$ & $[1,0,142]$ & $[11,2,13]$ & $[2,0,71]$ & $d_{\{2,5,7\}}(z)$ \\
  \hline
\end{tabular}

\subsection*{Acknowledgments}
This work is supported by the National Natural Science Foundation of China (Grant No. 11501052 and Grant No. 11571303).

\bibliographystyle{amsplain}

\end{document}